\newtheorem{theorem}{Theorem}
\newtheorem{lemma}{Lemma}
\newtheorem{proposition}{Proposition}
\newtheorem{corollary}{Corollary}
\newcommand{\qed}{\hfill $\Box$ }
\author{
C. Garcia-Martinez 
\\ \small  Departamento de Ciencias B\'asicas,
\\ \small Universidad Aut\'oma Metropolitana Unidad Azcapotzalco.
\\ \small Av. San Pablo 180, Col. Reynosa Tamaulipas,
\\ \small  C.P 02200 M\'exico D,F.
\\ \small e-mail: cgarcia@correo.azc.uam.mx
\and
F.R. McMorris
\\ \small Department of Applied Mathematics,
Illinois Institute of Technology
\\ \small Chicago, IL 60616 USA
\\ \small and
\\ \small Department of Mathematics,
University of Louisville
\\ \small Louisville, KY 40292 USA
\\ \small e-mail: mcmorris@iit.edu
\and
O. Ortega
\\ \small Department of Mathematics, Harold Washington College
\\ \small Chicago, IL 60601 USA
\\ \small e-mail: oortega@ccc.edu
\and
R.C. Powers
\\ \small Department of Mathematics,
University of Louisville
\\ \small Louisville, KY 40292 USA
\\ \small e-mail: robert.powers@louisville.edu}
\title{Axioms for consensus functions on the n-cube}
\date{\today}
\begin{document}
\maketitle

\newpage

\bigskip
\begin{abstract}
A $p$-value of a sequence $\pi =( x_1,x_2,\ldots,x_k)$ of elements of a
finite  metric space $(X,d)$ is an element $x$ for which
$\sum_{i=1}^{k}d^p(x,x_i)$ is minimum. The $\ell_p$--function with
domain the set of all finite sequences on $X$ and defined by\\
$\ell_p(\pi) = \{x:\ x$ is a $p$-value of $\pi\}$ is called the $\ell_p$--function on $(X,d)$.
The $\ell_1$ and $\ell_2 $ functions are the well studied median and mean functions respectively. In this note, simple characterizations of the $\ell_p$--functions on the n-cube are given. In addition, the center function (using the minimax criterion) is characterized as well as new results proved for the median and anti-median functions.  
\end{abstract}
\bigskip

\medskip

\noindent {\bf Keywords}: Consensus function; location function; center function; median function; $\ell_p$--function; n-cube.\\
\noindent {\bf Mathematics Subject Classification MSC2010}: 05C90, 90B80

\medskip

\section{Introduction}

A {\it consensus function} (aka {\it location function}) on a finite connected graph $G = (X,E)$ is a mapping $L:X^*\longrightarrow 2^X \backslash \{ \emptyset \}$, where $2^X$ denotes the set of all subsets of $X$, and  $X^* = \bigcup\limits_{k \geq 1} X^k$ with  $X^k = \overbrace{X
\times \cdots \times X}^{k\;\rm times}$. The elements of $X^*$ are called \emph{profiles} and a generic one of {\it length} $k$ is denoted by $\pi = (x_1,x_2, \dots ,x_k)$.  Let $d$ denote the usual geodesic distance, where $d(x,y)$ is the length of a minimum length path joining vertices $x$ and $y$. Suppose the graph $G =(X,E)$ represents the totality of possible locations. Then a profile $\pi = (x_1, \ldots, x_k)$ is formed where $x_i$  represents the best location from the point-of-view of client (voter, customer, user) $i$. A typical approach in location theory is to find those vertices (locations) in $X$ that are ``closest" to the profile $\pi$. There has been much work in this area, ranging from practical computational methods to more theoretical aspects. Since Holzman's paper in 1990  \cite{holz-90}, there have been many axiomatic studies of the procedures themselves which resulted in a much better understanding of the process of location (for a small sample see  \cite{hansen,mcmuvo,mifr-90} and references within). Now suppose the vertex set $X$ is the set of all linear orders (preference ranking) on a given set of alternatives. In this consensus situation, a profile $\pi = (x_1, \ldots, x_k)$ could represent the collection of ballots of the voters labeled by the set $\{1, \ldots, k\}$, i.e., $x_i$ is the preferred ranking of alternatives by voter $i$. Here a closest vertex to $\pi$ would represent the entire group's preferred consensus ranking.  Many references for this classical situation can be found in \cite{damc-03} and other books on voting theory. Another classic situation, and one pertinent to our study, is the process of selecting a committee from a slate of $n$ candidates. Here each of $k$ voters is to nominate a subset of candidates, so a ballot is simply a profile $\pi = (x_1, \ldots, x_k)$ where each $x_i$ is a subset of the candidates (\cite{brams,kilgour}). The vertices of the graph $G$ are the subsets of candidates and the committee consensus function will return one or more subsets closest to the profile.

Four popular measures of the closeness, or remoteness, of a vertex $x$ to a profile $\pi = (x_1, \ldots, x_k)$ are:
\begin{enumerate}
\item The {\it eccentricity} of $x$, $e(x,\pi) = \text{max}\{d(x,x_1),d(x,x_2), \ldots ,d(x,x_k)\} $.
\item The {\it status} of $x$, $S_{\pi}(x) = \sum_{i=1}^kd(x,x_i)$.
\item The {\it square status} of $x$, $SS_{\pi}(x) = \sum_{i=1}^kd^2(x,x_i)$.
\item The {\it $\ell_p$ status} of $x$, $\ell_pS_{\pi}(x) = \sum_{i=1}^kd^p(x,x_i)$.
\end{enumerate}

The consensus functions based on the these measures of remoteness have been defined as follows:\\\\
$(a)$ The \emph{center function}, denoted by \emph{Cen},
is defined by
$$Cen(\pi) = \{x \in X : e(x,\pi)\,\, \text{is minimum}\}.$$
\\
$(b)$\,\, The \emph{median function}, denoted by \emph{Med}, is defined by
$$Med(\pi) = \{x \in X : S_{\pi}(x)\,\, \text{is minimum}\}.$$ \\
$(c)$\,\, The \emph{mean function}, denoted by \emph{Mean}, is defined by $$Mean(\pi)
= \{x \in X : SS_{\pi}(x)\,\, \text{is minimum}\}.$$ \\
$(d)$\,\, The \emph{$\ell_p$-function}, denoted by \emph{$\ell_p$}, is defined by $$\ell_p(\pi)
= \{x \in X : \ell_pS_{\pi}(x)\,\, \text{is minimum}\}.$$ \\
Of course the median and mean functions are special cases of the $\ell_p$--function, but earlier work (\cite{11,mcmuor-12,mn-2011}) shows a striking difference between the case of $p = 1$ and $p > 1$.\\

In this paper we focus on consensus functions on the {\it n-dimensional hypercube} $Q_n = (X,E)$ whose vertex set is $X = \{(w_1, \ldots, w_n): w_i \in \{0,1\}\}$. Of course the natural realization of $Q_n$ is the set of all subsets of an $n$-element set.  Recall that for $u = (u_1, \dots, u_n)$ and $v = (v_1, \ldots, v_n)$ vertices in $Q_n$, $uv$ is an edge of $Q_n$ if and only if $\sum_{i=1}^n|u_i - v_i| = 1$. We set $u \leq v$ if and only if $u_i \leq v_i$ for all $i$. Let $d$ be the usual Hamming distance, where $d(u,v) = \sum_{i=1}^n|u_i - v_i|$, so that $uv$ is an edge if and only if $d(u,v) = 1$.  Let $\oplus$ denote {\it modulo} 2 addition, and $u \oplus v = (u_1 \oplus v_1, \ldots, u_n \oplus v_n)$. For a profile $\pi = (x_1, \dots, x_k)$ and $u \in Q_n$ let $\pi \oplus u = (x_1 \oplus u, \dots, x_k \oplus u)$. Let $\bf{0}$ = $(0, \ldots, 0)$ and $\bf{1}$ = $(1, \dots, 1)$.  Note that $x \oplus x = \bf{0}$ for all $x \in Q_n$. Also it is easy to see that for $x,y$ and $z$ vertices in $Q_n$, $d(x,y) = d(x \oplus z, y \oplus z)$. We set $e_j \in Q_n$ to be the vertex with $0's$ everywhere except $1$ in the $j^{th}$ coordinate. So, for example, in $Q_5$
$$
(0,0,1,1,0) = e_3 \oplus e_4  \mbox{ and } (0,0,1,0,1) \oplus e_3 = (0,0,0,0,1)$$
$$ (0,1,0,1,1) \oplus (0,0,1,1,0) = (0,1,1,0,1) = e_2 \oplus e_3 \oplus e_5.
$$

Let $\langle \pi \rangle$ denote the subgraph induced by the vertices comprising $\pi$. Note that $\langle \pi \oplus v \rangle$ is isomorphic to $\langle \pi \rangle$ for all $v \in Q_n$, and so intuitively $\langle \pi \oplus v \rangle$ is simply a ``translation" of $\langle \pi \rangle$ to another position within $Q_n$. Our goal is to use the particular structure of $Q_n$ to present a very simple unifying approach to give characterizations of $Cen, Med, \mbox{ and } \ell_p$ on these graphs.  Mulder and Novick (\cite{mn-2011,mn-2013}) have given an elegant set of axioms characterizing $Med$ on all median graphs (of which $Q_n$ is a special case) whereas our  axioms are essentially straightforward properties that follow from the definitions. At present the most general graph for which characterizations exist for $Cen$, $Mean$, and $\ell_p$  is a tree (\cite{mcrowa-01, mcmuor-10, mcmuor-12, mupere-08}). An interesting weighted version of $Cen$ on $Q_n$ is studied in \cite{brams}.

We mention that the following results can be framed in the more abstract context of finite Boolean algebras, as it is done in \cite{kp,oscar2014, oscar2015}. We prefer to work in the more specific situation of $n$-cube since properties become quite easy to visualize, and yet we are working without loss of generality since every finite Boolean algebra is isomorphic to an $n$-cube.

\medskip

        \section{The Axioms and Characterizations of $Cen$, $Med$, and $\ell_p\text{-function}$.}

In this section we give two very simple properties that will allow for a general result that can be used to give a new way to view  $Cen, \ell_p$ and $Med$ defined on $Q_n$.  Let $f:X^*\longrightarrow 2^X \backslash \{ \emptyset \}$ be a consensus function on  $Q_n = (X,E)$. Our key axiom for a consensus function $f$ is the following.
\begin{description}
\item[Translation (T)]: For any profile $\pi$ and vertices $u$ and $v$ of $Q_n$,
\[
u \in f(\pi) \mbox{ implies that } u \oplus v \in f(\pi \oplus v)
\]

\end{description}

Note that this is equivalent to $u \in f(\pi) \mbox{ if and only if } u \oplus v \in f(\pi \oplus v)$.

Now let $f$ and $g$ be consensus functions on $Q_n$ and $x_0$ a vertex. Then we say that $f$ and $g$ \emph{agree} at $x_0$ if 
for any profile $\pi$ ,
\[
x_0 \in f(\pi) \mbox{ if and only if } x_0 \in g(\pi).
\]

\begin{theorem} If the consensus functions $f$ and $g$ on $Q_n$ both satisfy $(T)$ and agree at a vertex $x_0$, then $f = g$.
\end{theorem}
\begin{proof}  Let $\pi$ be a profile and $v \in X$. Then there exists $v' \in X$ such that $v \oplus v' = x_0$. Since $f$ satisfies $(T)$, we have
\[
v \in f(\pi) \mbox{ if and only if } v \oplus v' = x_0 \in f(\pi \oplus v').
\]
Because $f$ and $g$ agree at $x_0$,
\[
x_0 \in f(\pi \oplus v') \mbox{ if and only if } x_0 \in g(\pi \oplus v').
\]
Since $g$ satisfies $(T)$, 
\[
v \oplus v' = x_0  \in g(\pi \oplus v') \mbox{ if and only if } v \in g(\pi).
\]
Hence $v \in f(\pi)$ if and only if $v \in g(\pi)$. \qed
\end{proof}

Note that Theorem 1 says that if $f$ and $g$ are consensus functions on $Q_n$ that both satisfy $(T)$, then $f = g$ if the conditions placing {\bf 0} in $f(\pi)$ are the same as the conditions placing {\bf 0} in $g(\pi)$. 

As observed before, $d(x,y) = d(x \oplus z, y \oplus z)$ for $x,y$ and $z$ vertices in $Q_n$. Using this and the definitions it is easy to see that $Cen, Med$ and $\ell_p$ all satisfy $(T)$. Therefore, characterizations will follow once the conditions are obtained for when ${\bf0} \in Cen(\pi)$, ${\bf0} \in Med(\pi)$ and ${\bf0} \in \ell_p(\pi)$.  We present these results in a series of Lemmas and Corollaries. 

Let $u \in Q_n$ and set $\Vert u \Vert = d({\bf 0},u)$, i.e., the number of ones that appear in the representation $u$ as a vertex of $Q_n$. Let $\pi = (x_1,x_2, \ldots . x_k)$ be a profile on $Q_n$. Then $\Vert \pi \Vert$ is defined to be 
\[ \Vert \pi \Vert = max \{\Vert x_1 \Vert, \Vert x_2 \Vert, \ldots ,\Vert x_k \Vert\}.\]

\begin{lemma} Let $Cen$ be the center function on $Q_n$, and $\pi$ a profile. Then
\[ {\bf 0} \in Cen(\pi) \mbox{ if and only if } \Vert \pi \Vert \leq \Vert \pi \oplus u \Vert \,\,  \ for\  all\ u \in Q_n.
\]
\end{lemma}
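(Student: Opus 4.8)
The plan is to reduce the biconditional to a single identity: for every vertex $v \in Q_n$, the eccentricity $e(v,\pi)$ equals $\Vert \pi \oplus v \Vert$. Once this is in hand, the lemma is immediate, since by definition $\mathbf{0} \in Cen(\pi)$ means precisely that $e(\mathbf{0},\pi) \leq e(v,\pi)$ for every $v \in Q_n$.

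First I would compute the eccentricity of the origin directly from the definition:
\[
e(\mathbf{0},\pi) = \max_{1 \leq i \leq k} d(\mathbf{0},x_i) = \max_{1 \leq i \leq k} \Vert x_i \Vert = \Vert \pi \Vert .
\]
Next, for an arbitrary $v$, I would invoke the translation-invariance of the Hamming distance recorded in the introduction, namely $d(x,y) = d(x \oplus z, y \oplus z)$. Taking $z = v$ gives $d(v,x_i) = d(\mathbf{0}, x_i \oplus v) = \Vert x_i \oplus v \Vert$ for each $i$, and taking the maximum over $i$ yields $e(v,\pi) = \Vert \pi \oplus v \Vert$. This identity is the crux of the argument.

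With the identity established, the conclusion follows at once: $\mathbf{0} \in Cen(\pi)$ if and only if $\Vert \pi \Vert = e(\mathbf{0},\pi) \leq e(v,\pi) = \Vert \pi \oplus v \Vert$ for every $v \in Q_n$, which is exactly the stated condition after renaming $v$ as $u$. I do not anticipate any real obstacle here; the only step requiring care is the correct application of the distance identity, in particular the observation that the maximum defining the eccentricity commutes with the coordinatewise translation $x_i \mapsto x_i \oplus v$. In effect the lemma is the bookkeeping version of the already-noted fact that $Cen$ satisfies axiom $(T)$, making explicit the condition that places $\mathbf{0}$ in $Cen(\pi)$, as required for the application of Theorem 1.
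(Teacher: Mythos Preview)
Your proposal is correct and follows exactly the paper's own argument: the authors simply note that $e(\mathbf{0},\pi)=\Vert\pi\Vert$ and invoke the translation identity $d(x,y)=d(x\oplus z,y\oplus z)$, which is precisely the content of your key identity $e(v,\pi)=\Vert\pi\oplus v\Vert$. You have merely spelled out the details the paper leaves implicit.
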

\begin{proof}
The result is clear because $d(x,y) = d(x \oplus z, y \oplus z)$ in $Q_n$, and $e({\bf 0}, \pi) = \Vert \pi \Vert$ for any profile $\pi$.\qed\\
\end{proof}

\begin{corollary} Let $f$ be a consensus function on $Q_n$. Then $f = Cen$ if and only $f$ satisfies (T) and for every profile $\pi$ and $u \in Q_n$,
\[ {\bf 0} \in f(\pi) \mbox{ if and only if } \Vert \pi \Vert \leq \Vert \pi \oplus u \Vert .
\]
\end{corollary}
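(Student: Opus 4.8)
The plan is to read this corollary as the direct specialization of Theorem~1 to the vertex $x_0 = {\bf 0}$, with Lemma~1 supplying the membership criterion for $Cen$. I would prove the biconditional in two directions.

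For the forward direction, suppose $f = Cen$. The remark following Theorem~1 already records that $Cen$ satisfies (T), using the invariance $d(x,y) = d(x \oplus z, y \oplus z)$; hence $f$ satisfies (T) as well. The displayed equivalence is then precisely the statement of Lemma~1, read with $Cen$ in place of $f$, so nothing further is needed.

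For the backward direction, suppose $f$ satisfies (T) and the displayed condition. By Lemma~1, $Cen$ satisfies the identical condition, namely ${\bf 0} \in Cen(\pi)$ if and only if $\Vert \pi \Vert \leq \Vert \pi \oplus u \Vert$ for all $u \in Q_n$. Thus the conditions placing ${\bf 0}$ in $f(\pi)$ and in $Cen(\pi)$ coincide, which is exactly what it means for $f$ and $Cen$ to agree at the vertex ${\bf 0}$ in the sense defined just before Theorem~1. Since both $f$ and $Cen$ satisfy (T), Theorem~1 applied at $x_0 = {\bf 0}$ yields $f = Cen$.

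I expect no genuine obstacle here, since Theorem~1 and Lemma~1 carry all the weight. The only point I would take care to state explicitly is the scope of the quantifier on $u$: the condition in the corollary must be understood so that ``for all $u \in Q_n$'' governs the right-hand inequality \emph{inside} the biconditional, matching the membership criterion of Lemma~1 verbatim, rather than asserting the biconditional separately for each fixed $u$. Once that reading is fixed, the two functions agree at ${\bf 0}$ and the conclusion follows at once.
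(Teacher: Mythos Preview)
Your proposal is correct and matches the paper's intended argument exactly: the paper gives no separate proof of this corollary, treating it as an immediate consequence of Theorem~1 (specialized to $x_0 = {\bf 0}$) together with Lemma~1, which is precisely what you spell out. Your remark on the quantifier scope of $u$ is a worthwhile clarification of the statement.
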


Mulder and Novick \cite{mn-2011} give an elegant characterization of $Med$ on $Q_n$, which was extended to all median graphs in \cite{mn-2013}. We will give another characterization using the approach given by Theorem 1. For a profile $\pi = (x_1, \ldots, x_k)$ let $x_i = (x_1^i, \ldots, x_n^i)$. The next result has been noted in \cite{mn-2011}.

\begin{lemma} Let $Med$ be the median function on ${Q_n}$ and $\pi = (x_1, \ldots, x_k)$ a profile.
Then
\[{\bf 0} \in Med(\pi) \mbox{ if and only if }  \sum_{j=1}^kx_i^j \leq \frac{k}{2} \mbox{ for all } i.
\]
\end{lemma}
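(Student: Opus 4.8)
The plan is to exploit the fact that the Hamming distance splits additively across coordinates, so that the status function can be minimized one coordinate at a time. Writing $x_i^j$ for the $i$-th coordinate of the $j$-th profile member, I would first record the decomposition
\[
S_\pi(u) = \sum_{j=1}^k d(u,x_j) = \sum_{j=1}^k \sum_{i=1}^n |u_i - x_i^j| = \sum_{i=1}^n \Big( \sum_{j=1}^k |u_i - x_i^j| \Big),
\]
which shows that the contribution of coordinate $i$ to $S_\pi(u)$ depends only on the single bit $u_i$. Consequently a vertex minimizes $S_\pi$ exactly when each of its coordinates independently minimizes the corresponding inner sum.

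Next I would evaluate that inner sum for the two possible values of $u_i$. Setting $c_i = \sum_{j=1}^k x_i^j$ (the number of profile members with a $1$ in position $i$), the choice $u_i = 0$ contributes $\sum_{j=1}^k x_i^j = c_i$, while $u_i = 1$ contributes $\sum_{j=1}^k (1 - x_i^j) = k - c_i$. Hence the coordinate-$i$ contribution is minimized by taking $u_i = 0$ precisely when $c_i \leq k - c_i$, that is, when $c_i \leq k/2$, with the two choices tying when $c_i = k/2$.

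Finally I would assemble these coordinatewise optimalities. Because $\mathbf{0}$ has every coordinate equal to $0$, and minimization is independent across coordinates, $\mathbf{0}$ lies in $Med(\pi)$ if and only if $u_i = 0$ is an optimal choice in every coordinate $i$; that is, if and only if $c_i = \sum_{j=1}^k x_i^j \leq k/2$ for all $i$, which is exactly the claimed condition.

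I do not expect a genuine obstacle here: the only point requiring a little care is the boundary case $c_i = k/2$, where both bit-values are optimal, so that $\mathbf{0}$ remains a (generally non-unique) minimizer. This is precisely why the inequality in the statement is weak rather than strict, and it is the reason the forward and backward implications both go through with the same threshold.
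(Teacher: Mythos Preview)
Your argument is correct and is the standard coordinatewise decomposition of the Hamming status function. Note that the paper itself does not supply a proof of this lemma at all; it simply attributes the result to Mulder and Novick \cite{mn-2011}. The very computation you carry out---switching the order of summation so that $S_\pi(u) = \sum_{i} \sum_{j} |u_i - x_i^j|$ and then minimizing each coordinate independently---does, however, appear later in the paper inside the proof of Theorem~2 (when showing that $Med$ satisfies (Maj) and (RR)), so your approach is entirely in line with the paper's methods.
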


\begin{corollary}  Let $f$ be a consensus function on $Q_n$. Then $f = Med$ if and only $f$ satisfies (T) and for any profile $\pi = (x_1, \ldots, x_k)$,
\[{\bf 0} \in f(\pi) \mbox{ if and only if }  \sum_{j=1}^kx_i^j \leq \frac{k}{2} \mbox{ for all } i.
\]
\end{corollary}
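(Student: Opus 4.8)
The plan is to apply Theorem 1 with the distinguished vertex $x_0 = {\bf 0}$, using the preceding Lemma characterizing $Med$ as the bridge. Since the statement is a biconditional, I would split it into the two implications.

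For the forward direction, assume $f = Med$. The discussion after Theorem 1 already records that $Med$ satisfies $(T)$, so $f$ does too; and substituting $Med$ for the general function into the Lemma immediately gives that ${\bf 0} \in f(\pi)$ if and only if $\sum_{j=1}^k x_i^j \leq \frac{k}{2}$ for all $i$. This half is essentially automatic.

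For the reverse direction, suppose $f$ satisfies $(T)$ together with the stated placement condition for ${\bf 0}$. The key observation is that the Lemma characterizes membership of ${\bf 0}$ in $Med(\pi)$ by exactly the same inequality $\sum_{j=1}^k x_i^j \leq \frac{k}{2}$ for all $i$. Combining the hypothesis on $f$ with the Lemma therefore yields that ${\bf 0} \in f(\pi)$ if and only if ${\bf 0} \in Med(\pi)$; in other words, $f$ and $Med$ agree at ${\bf 0}$ in the sense defined before Theorem 1. Since both $f$ and $Med$ satisfy $(T)$, Theorem 1 applies with $x_0 = {\bf 0}$ and forces $f = Med$.

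I do not expect a genuine obstacle here: the argument is a direct assembly of Theorem 1 and the Lemma, and the only thing that must be confirmed is that $Med$ satisfies $(T)$, which has already been noted. All the substance of the characterization has been pushed into the Lemma (the inequality condition for placing ${\bf 0}$) and into Theorem 1 (the translation-based rigidity), so the Corollary follows simply by matching the two placement conditions at the single vertex ${\bf 0}$.
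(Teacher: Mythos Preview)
Your proposal is correct and is exactly the argument the paper has in mind: the corollary is stated without proof precisely because it is the direct combination of Theorem~1 (applied at $x_0={\bf 0}$) with Lemma~2, together with the already-noted fact that $Med$ satisfies~(T).
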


For the function $\ell_p$ it is easy to see from the definitions that for any profile $\pi$ and $a$ in $Q_n$,

\[ {\bf 0} \in \ell_p(\pi) \mbox{ if and only if } a = {\bf 0} \oplus a \in \ell_p(\pi \oplus a). \]

As in \cite{oscar2015} we consider the $p$\emph{-Characteristic} of  the profile $\pi=(x_1,x_2, \dots,x_{k})$ to be the number
\begin{equation}\label{eq1}
 Char_p(\pi) = \sum_{i = 1}^{n}\Vert x_i\Vert^p. \notag
\end{equation}
Lemma 3.12  in \cite{oscar2015} gives the following proposition.

\begin{lemma} Consider the function $\ell_p$ on ${Q_n}$, and let $\pi = (x_1, \ldots, x_k)$ be a profile. Then
\[{\bf 0} \in \ell_p(\pi) \mbox{ if and only if }\,\, Char_p(\pi) \leq Char_p(\pi \oplus a) \mbox{ for every } a \mbox{ in } Q_n.
\]
\end{lemma}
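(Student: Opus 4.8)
The plan is to show that both directions of the claimed equivalence reduce to the defining minimality condition for $\ell_p$, via the translation invariance of the Hamming distance recorded in the introduction. The one identity I need to nail down is that the $\ell_p$ status at an arbitrary vertex $a$ coincides with the $p$-Characteristic of the translated profile $\pi \oplus a$.

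First I would evaluate the status at the origin: by definition $\ell_pS_{\pi}({\bf 0}) = \sum_{i=1}^{k} d^p({\bf 0}, x_i) = \sum_{i=1}^{k} \Vert x_i \Vert^p = Char_p(\pi)$, since $\Vert u \Vert = d({\bf 0}, u)$. For this to match the stated formula, the sum defining $Char_p$ must range over the $k$ profile entries rather than over $n$.

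Next, for any $a \in Q_n$, I would apply $d(x,y) = d(x \oplus z, y \oplus z)$ with $z = a$ to each summand, so that $d(a, x_i) = d(a \oplus a, x_i \oplus a) = d({\bf 0}, x_i \oplus a) = \Vert x_i \oplus a \Vert$. Raising to the $p$-th power and summing yields $\ell_pS_{\pi}(a) = \sum_{i=1}^{k} \Vert x_i \oplus a \Vert^p = Char_p(\pi \oplus a)$, because the entries of $\pi \oplus a$ are exactly $x_1 \oplus a, \ldots, x_k \oplus a$.

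Finally, the definition of $\ell_p$ gives ${\bf 0} \in \ell_p(\pi)$ if and only if $\ell_pS_{\pi}({\bf 0}) \leq \ell_pS_{\pi}(a)$ for every vertex $a$; substituting the two identities converts this inequality into $Char_p(\pi) \leq Char_p(\pi \oplus a)$ for all $a \in Q_n$, which is the assertion. I expect no real obstacle here: the proof is a direct unwinding of the definitions, and the only point needing care is applying the translation with $z = a$ (rather than $z = x_i$), so that the test vertex is carried to ${\bf 0}$ while the profile is shifted.
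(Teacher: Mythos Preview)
Your argument is correct. The key identity $\ell_pS_{\pi}(a) = Char_p(\pi \oplus a)$ follows exactly as you describe, by applying the translation invariance $d(x,y) = d(x \oplus z, y \oplus z)$ with $z = a$ to carry the test vertex to ${\bf 0}$, and then the equivalence is just the defining minimality condition for $\ell_p$. You are also right that the upper limit in the paper's definition of $Char_p(\pi)$ should be $k$ (the profile length), not $n$; otherwise the formula does not even make sense, and your proof implicitly corrects this.

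As for comparison with the paper: the paper does not actually give a proof of this lemma. It simply records the statement and attributes it to Lemma~3.12 of the cited reference on the $\ell_p$ function on finite Boolean lattices. Your direct two-line computation is precisely the natural argument one would expect that reference to contain, so there is no substantive difference in approach to discuss---you have supplied the missing details rather than found an alternative route.
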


\begin{corollary}  Let $f$ be a consensus function on $Q_n$. Then $f = \ell_p$ if and only $f$ satisfies (T) and for any profile $\pi = (x_1, \ldots, x_k)$,
\[{\bf 0} \in f(\pi) \mbox{ if and only if } Char_p(\pi) \leq Char_p(\pi \oplus a) \mbox{ for every vertex } a \mbox{ in } Q_n.
\]
\end{corollary}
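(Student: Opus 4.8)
The plan is to obtain this corollary directly from Theorem 1 and Lemma 3, exactly in parallel with how Corollaries 1 and 2 follow from Theorem 1 together with Lemmas 1 and 2. The guiding observation is that the displayed $Char_p$ inequality ``$Char_p(\pi) \leq Char_p(\pi \oplus a)$ for every $a \in Q_n$'' is nothing other than the statement, supplied by Lemma 3, that ${\bf 0} \in \ell_p(\pi)$. Hence the condition placed on $f$ in the corollary is precisely the condition that $f$ and $\ell_p$ agree at the vertex $x_0 = {\bf 0}$.

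First I would dispose of the forward implication. Assuming $f = \ell_p$, I need only recall two facts already available. The first is that $\ell_p$ satisfies (T): since $d(u \oplus v, x_i \oplus v) = d(u, x_i)$, the $\ell_p$-status satisfies $\ell_p S_{\pi \oplus v}(u \oplus v) = \ell_p S_\pi(u)$, so that $u$ minimizes the $\ell_p$-status for $\pi$ exactly when $u \oplus v$ minimizes it for $\pi \oplus v$. The second is Lemma 3 itself, which is exactly the asserted characterization of when ${\bf 0} \in \ell_p(\pi)$. These two facts together furnish the two required properties of $f$.

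For the converse I would invoke Theorem 1 with $x_0 = {\bf 0}$. Suppose $f$ satisfies (T) and that for every profile $\pi$ we have ${\bf 0} \in f(\pi)$ if and only if $Char_p(\pi) \leq Char_p(\pi \oplus a)$ for all $a \in Q_n$. By Lemma 3 the right-hand side is equivalent to ${\bf 0} \in \ell_p(\pi)$, so ${\bf 0} \in f(\pi)$ if and only if ${\bf 0} \in \ell_p(\pi)$; that is, $f$ and $\ell_p$ agree at ${\bf 0}$. Since $\ell_p$ also satisfies (T), both hypotheses of Theorem 1 are met, and we conclude $f = \ell_p$.

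There is no genuine obstacle at this stage: the substantive content resides in Lemma 3 and in the abstract reduction provided by Theorem 1, both of which may be assumed. The only thing to confirm is the routine identification of the $Char_p$ condition with agreement at ${\bf 0}$, which is immediate once Lemma 3 is in hand.
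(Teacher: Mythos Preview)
Your proposal is correct and is exactly the approach intended by the paper: the corollary is stated without proof because it follows immediately from Theorem~1 (with $x_0 = {\bf 0}$) together with Lemma~3 and the already-noted fact that $\ell_p$ satisfies (T). There is nothing to add.
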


Here are three other examples of consensus functions that satisfy the Translation property. However it is clear that these functions would not be useful in committee elections or as location functions, for instance.
\begin{description}
\item[Example 1:] Let $f_1$ be the consensus function on $Q_n$ defined by $f(\pi) = \{x_1\}$ for any profile $\pi = (x_1, \ldots, x_k)$. That is, $f_1$ is a standard projection function. Then clearly $f_1$ satisfies (T).
 \item[Example 2:] Let $f_2$ be the consensus function on $Q_n$ defined by $f_2(\pi) = X$ for all profiles $\pi$. That is, $f_2$ is the constant function with output the entire vertex set $X$. Then $f_2$ satisfies (T), and moreover it can be easily shown that it is the only constant function that satisfies (T).
 \item[Example 3:] Let $f_3$ be the consensus function on $Q_n$ defined by $f_3(\pi) = \{\pi\}$ for all $\pi$ where $\{\pi\}$ is the set of vertices appearing in the profile $\pi$. Then clearly $f_3$ satisfies (T).
\end{description} 

The function $f_2$ allows us to see some of the implications of imposing (T). First we need to recall one of the crucial axioms for the characterization of $Med$ (\cite {mcmuro-98,mn-2011,mn-2013}).
\begin{description}
\item[Consistency (C)]: The consensus function $f$ satisfies (C) if, for profiles $\pi_1$ and $\pi_2$; 
\[
f(\pi_1) \cap f(\pi_2) \neq \emptyset \mbox{ implies } f(\pi_1\pi_2) = f(\pi_1) \cap f(\pi_2).
\]

\end{description}

\begin{proposition} A consensus function $f$ on $Q_n$ satisfies (T), (C) and
\[ \bigcap_{x \in X}f(x) \neq \emptyset \]
if and only if $f = f_2$.
\end{proposition}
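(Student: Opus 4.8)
The plan is to handle the two directions separately, reducing everything to single-vertex profiles before bootstrapping up with (C). The forward direction is routine: I would simply verify that $f_2$ satisfies all three hypotheses. Property (T) is already recorded in Example 2; consistency holds because $f_2(\pi_1) \cap f_2(\pi_2) = X = f_2(\pi_1\pi_2)$ for all profiles $\pi_1,\pi_2$; and $\bigcap_{x \in X} f_2(x) = X \neq \emptyset$. Nothing here requires real work.

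For the converse, assume $f$ satisfies (T), (C), and $\bigcap_{x \in X} f(x) \neq \emptyset$. First I would choose a vertex $x_0 \in \bigcap_{x \in X} f(x)$, so that $x_0 \in f(x)$ for every single-vertex profile $(x)$. The crucial step is to promote this single common vertex to all of $X$ by showing $f({\bf 0}) = X$. Applying (T) to the profile $(x)$ with translation vector $v = x$ gives that $x_0 \in f(x)$ if and only if $x_0 \oplus x \in f({\bf 0})$; since $x_0 \in f(x)$ holds for every $x$, we obtain $x_0 \oplus x \in f({\bf 0})$ for all $x \in X$. As $x$ ranges over $X$, the element $x_0 \oplus x$ ranges over all of $X$, because the map $x \mapsto x_0 \oplus x$ is its own inverse and hence a bijection of $X$. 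Therefore $f({\bf 0}) = X$. A second use of (T) then yields $f(y) = X$ for every single-vertex profile: $u \in f(y)$ if and only if $u \oplus y \in f({\bf 0}) = X$, which always holds.

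Finally I would lift this to arbitrary profiles by induction on the length $k$, using (C). The base case $k = 1$ is precisely what was just established. For the inductive step, write a length-$k$ profile as $\pi = \rho\sigma$, where $\rho$ has length $k-1$ and $\sigma$ is a single vertex. By the inductive hypothesis $f(\rho) = X$ and by the base case $f(\sigma) = X$, so $f(\rho) \cap f(\sigma) = X \neq \emptyset$, and (C) forces $f(\pi) = f(\rho\sigma) = f(\rho) \cap f(\sigma) = X$. Hence $f(\pi) = X$ for every profile $\pi$, that is, $f = f_2$.

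The main obstacle is the step showing $f({\bf 0}) = X$. The intersection hypothesis supplies only a single vertex $x_0$ common to all single-vertex profiles, and the real content is recognizing that translation converts this one common point into the entire cube once every single-vertex profile is pulled back to ${\bf 0}$. Once $f({\bf 0}) = X$ is in hand, both the remaining translation argument and the concatenation argument via (C) are straightforward.
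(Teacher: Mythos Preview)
Your proof is correct and follows essentially the same route as the paper's: pick a common vertex from the intersection hypothesis, use (T) to show $f({\bf 0}) = X$ via the bijection $x \mapsto x_0 \oplus x$, apply (T) again to get $f(y) = X$ for all single-vertex profiles, and finish by induction on profile length using (C). Your write-up is slightly more explicit about the bijectivity and the induction step, but the argument is the same.
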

\begin{proof}
Clearly $f_2$ satisfies the conditions, so now let $f$ be a consensus function that satisfies (T), (C) and the intersection condition. Let $v \in f(x)$ for all $x \in X$. Then since $f$ satisfies (T) we have $v \oplus x \in f(x \oplus x) = f(\bf{0})$ for all $x \in X$. Now let $w$ be an arbitrary vertex. Then $w = v \oplus (v \oplus w) \in f(\bf{0})$ and thus $f({\bf 0})= X$. So if $z$ is any vertex in $X$, $z \oplus x \in f(\bf{0})$ and since $f$ satisfies (T) we have 
\[z = (z \oplus x) \oplus x \in f({\bf 0} \oplus x) = f(x)
 \]
 Therefore $f(x) = X \mbox{ for all } x \in X$, which means that $f(\pi) = X$ for all profiles $\pi$ of length 1. Using (C) and induction we conclude that $f(\pi) = X$ for all profiles $\pi$, i.e., $f = f_2$.
 \qed
\end{proof}

\section{Alternative Characterizations of the Median and Anti-Median Functions on $Q_n$}

For any profile $\pi = (v_1, \ldots, v_k)$ such that
\[ v_i = (x_1^i, \ldots, x_n^i)\in \{0,1\}^n\]
for $i=1, \ldots k$, let $Maj(\pi) = (w_1, \ldots, w_n)$ be the vertex in $X$ such that
\[ w_i =  1 \text{ if and only if } \sum_{j=1}^kx_i^j > \frac{k}{2}\]
for $i=1, \ldots, n$.  We will say that a location function $f$ satisfies condition  $ \mathit{\bf(Maj)}$ if
\[ \mathit{Maj}(\pi) \in f(\pi)\]
for any profile $\pi$. We have previously noted that the median function satisfies (T) and we will show below that, as expected, $Med$ satisfies (Maj). However, there are other location functions that satisfy these two conditions, such as $f_2$ for example. But, arguably, $f_2$ is not a very reasonable method of consensus or location. So our next step is to invoke a condition that restricts the range of a location function.

For any profile $\pi = (v_1, \ldots, v_k)$ such that
\[ v_i = (x_1^i, \ldots, x_n^i)\in \{0,1\}^n\]
for $i=1, \ldots k$, define the {\bf Condorcet score} of $\pi$ to be
\[ Cs(\pi) = |\{i : \sum_{j=1}^k x_i^j = \frac{k}{2}\}|.\]
Observe that if the profile length $k$ is odd, then $Cs(\pi) = 0$. A location function $f$ satisfies {\bf Restricted Range (RR)} if
\[ |f(\pi)| \leq 2^{Cs(\pi)}\]
for any profile $\pi$.

We can now give a completely different characterization of $Med$ from that found in \cite{mn-2011}.

\begin{theorem} Let $f$ be a location function on $Q_n$. Then $f = Med$ if and only if $f$ satisfies (T), (Maj), and (RR).
\end{theorem}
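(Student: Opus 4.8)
The plan is to prove both implications, using throughout the coordinatewise description of $Med$ that follows from Lemma 2. Writing $s_i = \sum_{j=1}^k x_i^j$ for the $i$-th column sum of $\pi = (x_1, \ldots, x_k)$, I would partition the coordinates into $L = \{i : s_i < k/2\}$, $Z = \{i : s_i = k/2\}$, and $S = \{i : s_i > k/2\}$, so that $|Z| = Cs(\pi)$. Since $Med$ satisfies (T), a vertex $u$ lies in $Med(\pi)$ iff ${\bf 0} \in Med(\pi \oplus u)$; the column sums of $\pi \oplus u$ equal $s_i$ when $u_i = 0$ and $k - s_i$ when $u_i = 1$, so Lemma 2 shows that $u \in Med(\pi)$ exactly when $u_i = 0$ for $i \in L$, $u_i = 1$ for $i \in S$, and $u_i$ is arbitrary for $i \in Z$. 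In particular $|Med(\pi)| = 2^{Cs(\pi)}$.

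For the forward direction, (T) for $Med$ has already been noted. From the description above, $Maj(\pi)$, which carries a $1$ exactly on $S$ and $0$ elsewhere, lies in $Med(\pi)$ (its $Z$-coordinates are $0$, which is permitted), giving (Maj); and the equality $|Med(\pi)| = 2^{Cs(\pi)}$ gives (RR), in fact with equality.

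The substance is the converse. Assuming $f$ satisfies (T), (Maj), and (RR), I would show $f(\pi) = Med(\pi)$ for every $\pi$. The key step is to feed (Maj) every translate of $\pi$ and pull the output back with (T): for each $v \in Q_n$, (Maj) gives $Maj(\pi \oplus v) \in f(\pi \oplus v)$, and (T) then gives $Maj(\pi \oplus v) \oplus v \in f(\pi)$. A direct coordinatewise computation of $Maj(\pi \oplus v) \oplus v$ shows it equals $0$ on $L$, equals $1$ on $S$, and equals $v_i$ on $Z$; that is, it is precisely the median vertex agreeing with $v$ on the Condorcet coordinates. As $v$ ranges over $Q_n$, its $Z$-coordinates take all $2^{|Z|}$ values, so these outputs sweep out all of $Med(\pi)$, whence $Med(\pi) \subseteq f(\pi)$. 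Combining this inclusion with (RR) and the count $|Med(\pi)| = 2^{Cs(\pi)}$ forces $|f(\pi)| = |Med(\pi)|$ and therefore $f(\pi) = Med(\pi)$.

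The main obstacle, really the one computation to get right, is verifying that $Maj(\pi \oplus v) \oplus v$ has the claimed coordinate values: in particular that flipping $v_i$ on a Condorcet coordinate $i \in Z$ genuinely toggles that coordinate of the output (so that all of $Med(\pi)$, not merely a proper subset, is generated), while on $L$ and $S$ the output is independent of $v$. Once this is in hand, the role of each axiom is transparent: (Maj) together with (T) produce the inclusion $Med(\pi) \subseteq f(\pi)$, and (RR) supplies the matching upper bound. Alternatively, since both $f$ and $Med$ satisfy (T), one could invoke Theorem 1 and only check agreement at ${\bf 0}$, where the inclusion is immediate from (Maj) (because $Maj(\pi) = {\bf 0}$ exactly when ${\bf 0} \in Med(\pi)$) and the reverse uses the same generation-plus-(RR) argument.
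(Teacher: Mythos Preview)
Your argument is correct and follows essentially the same architecture as the paper's proof: establish (T), (Maj), (RR) for $Med$ by describing $Med(\pi)$ coordinatewise, then for the converse use (Maj) on translates together with (T) to obtain $Med(\pi)\subseteq f(\pi)$, and close with (RR) and the count $|Med(\pi)|=2^{Cs(\pi)}$. The only cosmetic difference is in the inclusion step: the paper picks each $v\in Med(\pi)$, translates by that $v$, and simply observes $Maj(\pi\oplus v)={\bf 0}$ (since all column sums of $\pi\oplus v$ are $\le k/2$), whence ${\bf 0}\in f(\pi\oplus v)$ and $v\in f(\pi)$; you instead range over all $v\in Q_n$ and compute $Maj(\pi\oplus v)\oplus v$ explicitly, which is a little more work but yields the same inclusion. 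Your parenthetical ``alternative'' via Theorem~1 is not really a shortcut, since verifying agreement at ${\bf 0}$ in both directions already requires the full generation-plus-(RR) argument.
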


\begin{proof} Assume $f = Med$. We already know that $f$ satisfies (T), so we only need to show that $Med$ satisfies (Maj) and (RR).

We will follow the notation given above. Let $\pi = (v_1, \ldots, v_k)$ be a profile such that
\[ v_i = (x_1^i, \ldots, x_n^i)\in \{0,1\}^n\]
for $i=1, \ldots k$ and let $Maj(\pi) = (w_1, \ldots, w_n) = w$. Now let $a = (y_1, \ldots, y_n) \neq w$ be such that $y_m \neq w_m$ for some $m$. First note that for every $j$, since $w_j$ and $x_j^i$ are equal for at least $\frac{k}{2}$ of the $i's$,
\[ \sum_{i=1}^k|y_j-x_j^i| \geq \sum_{i=1}^k|w_j-x_j^i|.\]
Since
\[ S_{\pi}(a) = \sum_{i=1}^kd(a,v_i)\ \text{where}\ d(a, v_i) = \sum_{j=1}^n|y_j-x_j^i|\]
we have
\[ S_{\pi}(a) = \sum_{i=1}^k \sum_{j=1}^n|y_j-x_j^i| = \sum_{j=1}^n \sum_{i=1}^k|y_j-x_j^i| \geq  \sum_{j=1}^n \sum_{i=1}^k|w_j-x_j^i| = S_{\pi}(w).\]
Therefore $w \in Med(\pi)$ and $f$ satisfies (Maj).

Let $u = (u_1, \ldots, u_n)$ be the vertex in $X$ such that
\[ u_i = 1\ \text{ if and only if }\ \sum_{j=1}^kx_i^j \geq \frac{k}{2}\]
for $i = 1, \ldots, n$. For any vertex $a = (y_1, \ldots, y_n)$ such that $w \leq a \leq u$ and for any $i \in \{1, \ldots, n\}$ such that $\sum_{j=1}^k x_i^j = \frac{k}{2}$ we get that $w_i = 0$, $u_i = 1$, and of course $y_i \in \{0,1\}$. Observe that
\[ \sum_{i=1}^k|y_j-x_j^i| = \frac{k}{2} = \sum_{i=1}^k|w_j-x_j^i|.\]
Since $y_i = w_i$ whenever $\sum_{j=1}^k x_i^j \neq \frac{k}{2}$ it follows that $S_{\pi}(a) = S_{\pi}(w)$ and so $a \in Med(\pi)$. Moreover, if $b = (z_1, \ldots, z_n)$ is vertex in $X$ such that $z_m \neq w_m$ for some $m \in \{1, \ldots, n\}$ where $\sum_{j=1}^k x_m^j \neq \frac{k}{2}$, then
\[ \sum_{i=1}^k|z_m-x_m^i| > \frac{k}{2} > \sum_{i=1}^k|w_m-x_m^i|.\]
In this case, $S_{\pi}(b) > S_{\pi}(w)$ and so $b \not\in Med(\pi)$. It now follows that
\[ Med(\pi) = \{ Maj(\pi) \oplus \sum_{\alpha \in A} i_{\alpha} : A \subseteq S\}\]
where
\[ S = \{\alpha \in \{1, \ldots, n\} : \sum_{j=1}^kx_{\alpha}^j = \frac{k}{2}\}.\]
Therefore, $|Med(\pi)| = 2^{|S|} = 2^{Cs(\pi)}$ and hence $Med$ satisfies (RR).

For the converse, assume that $f$ satisfies (T), (Maj), and (RR). We will show that $f = Med$. Let $\pi = (v_1, \ldots, v_k)$ be a profile. Then, using Theorem 2,
\[ v \in Med (\pi)\ \Leftrightarrow\ {\bf 0} \in Med(\pi \oplus v) \Leftrightarrow\ \sum_{j=1}^ky_i^j \leq \frac{k}{2} \mbox{ for all } i
\]
where $v_j \oplus v = (y_1^j, \ldots, y_n^j)$ for $j=1, \ldots, k$. Observe that $Maj(\pi \oplus v) = {\bf 0}$, and since $f$ satisfies $(Maj)$ it follows that ${\bf 0} \in f(\pi \oplus v)$. Since $f$ satisfies (T) we get
\[ v = {\bf 0} \oplus v \in f(\pi).\]
It now follows that $Med(\pi) \subseteq f(\pi)$ for any profile $\pi$. Therefore,
\[ |Med(\pi)| \leq |f(\pi)|\]
for any profile $\pi$. We know that $Med(\pi) = 2^{Cs(\pi)}$ and, by (RR), that $|f(\pi)| \leq 2^{Cs(\pi)}$ for any profile $\pi$. Hence $f(\pi) = Med(\pi)$ for any profile $\pi$ and we're done. \qed
\end{proof}

The three consensus functions that we have considered all minimize a criterion in order to produce vertices that are close to a given profile of vertices, and as such are useful in location theory. When finding locations to place noxious entities, it is more appropriate to maximize rather than minimize these objective functions, and the resulting ``anti"-functions have also been well-studied. Because we have proved Theorem 2 about the median function, we mention the \emph{anti-median function}, denoted by \emph{AM}, defined by
$$AM(\pi) = \{x \in X : S_{\pi}(x)\,\, \text{is maximum}\}.$$

$AM$ has been characterized on $Q_n$ in \cite{bal}, but we will give an alternate characterization as a corollary to Theorem 2. As before $\pi = (v_1, \ldots, v_k)$ is a profile such that
\[ v_i = (x_1^i, \ldots, x_n^i)\in \{0,1\}^n\]
for $i=1, \ldots k$. Let $Min(\pi) = (m_1, \ldots, m_n)$ be the vertex in $X$ such that
\[ m_i =  1 \text{ if and only if } \sum_{j=1}^kx_i^j < \frac{k}{2}\]
for $i=1, \ldots, n$.  We will say that a location function $f$ satisfies condition {\bf (Min)} if
\[ Min(\pi) \in f(\pi)\]
for any profile $\pi$.  Corollary 1 now follows from the proof of Theorem 2 in the obvious way by reversing the inequalities.

\begin{corollary} Let $f$ be a location function on $Q_n$. Then $f = AM$ if and only if $f$ satisfies (T), (Min), and (RR).
\end{corollary}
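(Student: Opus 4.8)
The plan is to follow the proof of Theorem 2 line for line, replacing the minimization of $S_\pi$ by its maximization and reversing every inequality. For the forward direction I would verify the three axioms in turn. That $AM$ satisfies (T) is immediate from $d(x,y) = d(x \oplus z, y \oplus z)$: translating a profile carries the maximizers of $S_\pi$ to the correspondingly translated vertices, exactly as in the median case. For (Min), the key is the per-coordinate decomposition. For a candidate $a = (y_1, \ldots, y_n)$, the contribution of coordinate $j$ to $S_\pi(a)$ is $\sum_{i=1}^k |y_j - x_j^i|$, which equals $\sum_i x_j^i$ when $y_j = 0$ and $k - \sum_i x_j^i$ when $y_j = 1$. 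To maximize this contribution one must choose $y_j$ to be the minority bit, i.e.\ $y_j = 1$ precisely when $\sum_i x_j^i < \frac{k}{2}$. Summing over $j$, this is achieved coordinatewise by $Min(\pi)$, so $Min(\pi) \in AM(\pi)$ and (Min) holds.

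For (RR) I would note that the only coordinates offering a genuine choice are the tie coordinates, where $\sum_i x_j^i = \frac{k}{2}$: there both values of $y_j$ give the same contribution $\frac{k}{2}$, while at every other coordinate the maximizing value is forced to the minority bit. Repeating the argument of Theorem 2 with the strict inequality reversed (so that any vertex differing from $Min(\pi)$ at a non-tie coordinate has strictly smaller status) yields
\[
AM(\pi) = \Big\{ Min(\pi) \oplus \textstyle\sum_{\alpha \in A} e_\alpha : A \subseteq S \Big\}, \quad S = \{\alpha : \textstyle\sum_{j} x_\alpha^j = \tfrac{k}{2}\},
\]
so $|AM(\pi)| = 2^{|S|} = 2^{Cs(\pi)}$, giving (RR) with equality.

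For the converse, assume $f$ satisfies (T), (Min), and (RR); I would first prove the inclusion $AM(\pi) \subseteq f(\pi)$ and then force equality by counting. Using (T) to reduce to the base point, $v \in AM(\pi)$ holds if and only if ${\bf 0} \in AM(\pi \oplus v)$, and by the coordinatewise analysis the latter is equivalent to every column sum of $\pi \oplus v$ being at least $\frac{k}{2}$. But ``no column sum is below $\frac{k}{2}$'' is exactly the condition under which $Min(\pi \oplus v) = {\bf 0}$. Since $f$ satisfies (Min), ${\bf 0} \in f(\pi \oplus v)$, and (T) then gives $v = {\bf 0} \oplus v \in f(\pi)$. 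Hence $AM(\pi) \subseteq f(\pi)$, and combining with (RR) we get $2^{Cs(\pi)} = |AM(\pi)| \leq |f(\pi)| \leq 2^{Cs(\pi)}$, so $f(\pi) = AM(\pi)$ for every profile.

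The routine parts are genuinely routine once the sign flips are tracked. The one step deserving care, and where a careless ``reverse the inequalities'' could slip, is the translation step in the converse: one must check that the maximizing condition ${\bf 0} \in AM(\pi \oplus v)$ (all column sums $\geq \frac{k}{2}$) is precisely the condition that collapses $Min$ to ${\bf 0}$, rather than $Maj$. Confirming this compatibility between the maximization criterion and the definition of $Min$ is the crux; every other part of the argument is the template of Theorem 2 read verbatim with the inequalities turned around.
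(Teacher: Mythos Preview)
Your proposal is correct and is exactly the approach the paper indicates: the paper gives no separate proof, stating only that the corollary ``follows from the proof of Theorem 2 in the obvious way by reversing the inequalities.'' You have faithfully carried this out, including correctly identifying that the condition ${\bf 0}\in AM(\pi\oplus v)$ (all column sums $\geq k/2$) is precisely what makes $Min(\pi\oplus v)={\bf 0}$, which is the one genuinely delicate point in the translation.
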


\end{document}